\documentclass[11pt,oneside]{amsart}
\usepackage[mathscr]{eucal}
\usepackage{amssymb, amsmath, array, amscd}
\usepackage[dvips]{graphicx}
\usepackage{color}
\usepackage{epsfig}
\setcounter{tocdepth}{2}
\newtheorem{theorem}{Theorem}[section]
\newtheorem{corollary}{Corollary}[section]

\newtheorem{lemma}{Lemma}[section]

\begin{document}

\title[Compactifications of quotients of the cylinder]{NONALGEBRAIC COMPACTIFICATIONS OF QUOTIENTS OF THE CYLINDER}

\author{Marco Brunella}

\address{Marco Brunella, Institut de Math\'ematiques de Bourgogne
-- UMR 5584 -- 9 Avenue Savary, 21078 Dijon, France}

\begin{abstract} We classify compact complex surfaces which contain a Zariski open subset whose universal covering is the cylinder ${\mathbb D}\times{\mathbb C}$.
\end{abstract}

\maketitle

\section{Introduction}

In this paper we shall be mainly concerned with surfaces of class ${\rm VII}_\circ^+$, that is compact connected smooth complex surfaces $S$ which are minimal and whose Betti numbers are
$$b_1(S)=1 \qquad {\rm and}\qquad b_2(S)>0.$$
According to classical results in Kodaira's classification \cite{Nak2}, every such surface is not K\"ahlerian and its algebraic dimension $a(S)$ is equal to $0$, that is there are no nonconstant meromorphic functions on $S$.

The first examples of class ${\rm VII}_\circ^+$ surfaces were discovered by Inoue around 1974 \cite{Ino1} \cite[\S 4.5]{Nak2}. They are obtained by compactifying the total space of a line bundle of negative degree over an elliptic curve; the compactification is done by gluing a cycle of rational curves, each one of selfintersection $-2$, so that the total selfintersection of the cycle is $0$. Later, this construction was generalized by Enoki \cite{Enok} \cite[\S 4.5]{Nak2}, by replacing the line bundle with an affine one, and by giving a characterization of the  surfaces so obtained. These surfaces are today usually called {\bf Enoki surfaces}.

Other examples of class ${\rm VII}_\circ^+$ surfaces were discovered again by Inoue in \cite{Ino2}, see also \cite[\S 4.1,4.3]{Nak2}. They are constructed by compactifying suitable quotients of the cylinder ${\mathbb D}\times{\mathbb C}$. The compactification is realized by gluing one or two cycles of rational curves, but now  the intersection form of each one of these cycles is negative. This compactification process is close to Hirzebruch's compactification of Hilbert modular surfaces \cite{Hirz}, and for this reason those surfaces constructed by Inoue are nowadays usually called {\bf Inoue-Hirzebruch surfaces}.

A totally different construction of class ${\rm VII}_\circ^+$ surfaces was introduced by Kato \cite{Kato} \cite[\S 5]{Nak2} \cite{Dlou}, via the so-called global spherical shells. It turns out, however, that this class of {\bf Kato surfaces} contains all Enoki surfaces and all Inoue-Hirzebruch surfaces as special cases. Kato surfaces which are not Enoki nor Inoue-Hirzebruch are referred to Kato surfaces of {\bf intermediate type}.

Quite recently, Dloussky, Oeljeklaus and Toma \cite{DOT2} proved a conjecture by Kato concerning a characterization of his surfaces in terms of the existence of sufficiently many rational curves. In the course of their proof, they established the following remarkable fact \cite[\S 3]{DOT2} (see also \cite{DlOe}): if $S$ is a Kato surface of intermediate type and $C\subset S$ is the union of all rational curves in $S$, then the universal covering of $S\setminus C$ is the cylinder ${\mathbb D}\times{\mathbb C}$. Thus, intermediate Kato surfaces can be seen as compactifications of quotients of ${\mathbb D}\times{\mathbb C}$, exactly as Inoue-Hirzebruch surfaces. However, the compactification procedure is here more complicated, because $C$ is not a cycle (or a pair of cycles) of rational curves; it is instead an {\it arboreal} cycle of rational curves, i.e. a cycle to which one or more trees are attached.

On the other side, if $S$ is an Enoki surface then $S\setminus \{$rational curves$\}$ is uniformised by ${\mathbb C}\times{\mathbb C}$, and not by ${\mathbb D}\times{\mathbb C}$. In some sense, intermediate Kato surfaces are closer to Inoue-Hirzebruch surfaces than to Enoki ones.

Our purpose is to show that this cylinder-quotient-compactification procedure cannot go beyond the surfaces already discovered. More generally, we shall work with class ${\rm VII}^+$ surfaces, which are by definition those compact complex surfaces, not necessarily minimal, whose minimal model is of class ${\rm VII}_\circ^+$.

\begin{theorem} \label{main}
Let $S$ be a surface of class ${\rm VII}^+$, and suppose that there exists a curve $C\subset S$ such that $S\setminus C$ is uniformised by the cylinder:
$$\widetilde{S\setminus C} = {\mathbb D}\times{\mathbb C} .$$
Then $S$ is either an Inoue-Hirzebruch surface or a Kato surface of intermediate type, possibly blown up at points belonging to the rational curves.
\end{theorem}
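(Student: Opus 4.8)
The plan is to convert the cylinder uniformisation into a holomorphic foliation on $S$ together with a transverse hyperbolic structure, and then to apply the known geometry of foliations on surfaces of class ${\rm VII}^+$. The first point is that the vertical fibration of ${\mathbb D}\times{\mathbb C}$ is canonical. Indeed, any entire curve ${\mathbb C}\to{\mathbb D}\times{\mathbb C}$ projects to a bounded, hence constant, holomorphic map to ${\mathbb D}$ by Liouville, so the fibres $\{z\}\times{\mathbb C}$ are exactly the images of the maximal entire curves; consequently every automorphism of ${\mathbb D}\times{\mathbb C}$ has the form $(z,w)\mapsto(\phi(z),a(z)w+b(z))$ with $\phi\in{\rm Aut}({\mathbb D})$ and $a$ nonvanishing. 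In particular the deck group $\Gamma=\pi_1(S\setminus C)$ preserves the vertical foliation, which therefore descends to a nonsingular foliation $\mathcal{F}_0$ on $S\setminus C$. The projection to the ${\mathbb D}$--factor is $\Gamma$--equivariant for the homomorphism $\rho\colon\Gamma\to{\rm Aut}({\mathbb D})={\rm PSL}(2,{\mathbb R})$, $F\mapsto\phi$, so $\mathcal{F}_0$ is transversely hyperbolic; and each leaf, being a quotient of ${\mathbb C}$ by a group of affine transformations, is parabolic, i.e. biholomorphic to ${\mathbb C}$, ${\mathbb C}^*$ or an elliptic curve.

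Second, I would extend $\mathcal{F}_0$ across $C$. Since $S\setminus C$ is Zariski open, the line field defining $\mathcal{F}_0$ extends meromorphically across the hypersurface $C$ and, after clearing denominators, yields a singular holomorphic foliation $\mathcal{F}$ on all of $S$ with isolated singularities. At the same time I would record two facts forced by the covering: $S\setminus C$ contains no rational curve (such a curve would lift to a nonconstant ${\mathbb P}^1\to{\mathbb D}\times{\mathbb C}$, impossible by the Liouville argument in both factors), so every rational curve of $S$ is contained in $C$; and $a(S)=0$, so $S$ carries no nonconstant meromorphic function. A preliminary analysis of the boundary behaviour of the developing map to ${\mathbb D}$ should show, moreover, that $C$ is $\mathcal{F}$--invariant.

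Third comes the classification step. Now $S$ is a class ${\rm VII}^+$ surface carrying a singular holomorphic foliation $\mathcal{F}$ which, off the invariant curve $C$, is transversely hyperbolic with parabolic leaves. I would exploit the transverse hyperbolic structure through the holonomy representation $\rho$ together with the index theory for foliations on surfaces: computing the Camacho--Sad self-intersections and the Baum--Bott indices along the components of $C$, and comparing with the numerical constraints of class ${\rm VII}^+$ (namely $b_1=1$, $b_2>0$, $a(S)=0$, and the negative-definite or isotropic intersection forms on the relevant cycles) should force $C$ to be an arboreal cycle of rational curves --- either a genuine cycle, or a cycle with trees attached --- and should exclude the affine (Enoki) behaviour, whose leaves would be uniformised by ${\mathbb C}\times{\mathbb C}$ rather than ${\mathbb D}\times{\mathbb C}$.

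Finally, I would identify $S$. From the dynamics of $\rho(\Gamma)$ and of the affine cocycle $(a,b)$ near $C$ one should obtain a contracting biholomorphic germ, and hence a global spherical shell, so that $S$ is a Kato surface; the shape of $C$ then distinguishes the Inoue--Hirzebruch case (a cycle, or a pair of cycles) from the intermediate Kato case (a cycle with trees). Blowing up points of the rational curves does not change $S\setminus C$, which accounts for the non-minimal possibilities in the statement. The main obstacle I expect is precisely this last passage from soft to hard data: producing the global spherical shell, i.e. the Kato structure, directly from the cylinder hypothesis without appealing to the global-spherical-shell conjecture in general, and simultaneously controlling the two possible shapes of the invariant curve $C$.
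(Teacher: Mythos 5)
Your setup (canonicity of the vertical fibration via Liouville, the homomorphism $\rho\colon\Gamma\to{\rm Aut}({\mathbb D})$, extension of ${\mathcal F}_0$ to a singular holomorphic foliation ${\mathcal F}$ on $S$ with $C$ invariant, absence of rational curves in $S\setminus C$) matches the paper's Section 2 and is sound. But the two later steps contain genuine gaps, and the last one is fatal. On the structure of $C$: the statement that Camacho--Sad and Baum--Bott computations ``should force'' $C$ to be an arboreal cycle is not an argument. The fact that each component of $C$ is a tree, a cycle, or a cycle with trees attached (with negative definite intersection form) comes for free from Enoki's and Nakamura's classification of curves on class ${\rm VII}_\circ^+$ surfaces; the real content is excluding the tree components, and the paper does this with a tool your outline never constructs: a \emph{real} codimension-one foliation ${\mathcal H}$ on $S\setminus C$ containing the leaves of ${\mathcal F}_0$, obtained as the level sets of a plurisubharmonic potential $F$ of the hyperbolic-area current on the infinite cyclic cover ($dd^cF=\widehat T$, $F\circ\Phi=F+\lambda$, via Toma's exactness results \cite{Toma}). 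A tree component would produce, by Camacho's separatrix theorem, a ${\mathbb C}^*$-leaf whose holonomy is an irrational rotation (Lemmas \ref{injective} and \ref{leaves}), and an irrational rotation cannot preserve the nonsingular trace of ${\mathcal H}$ on a transversal. Even making ${\mathcal H}$ nonsingular requires the vanishing $h^0(S,K_{\mathcal F})=0$, itself a nontrivial lemma resting on the vector-field results of \cite{Haus} and \cite{DOT1}.

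Second, and decisively: your final step --- extracting a contracting germ and hence a global spherical shell from the dynamics of $\rho(\Gamma)$ and the affine cocycle --- is exactly the hard part, and you yourself flag it as the main obstacle without indicating how to overcome it. No direct construction of a spherical shell from the cylinder hypothesis is known; that is essentially the global-spherical-shell problem in this setting. The paper sidesteps it entirely by a topological count: using ${\mathcal H}$ and its orthogonal one-dimensional foliation ${\mathcal L}$, it finds (after resolving ${\rm Sing}({\mathcal F})$) a tubular neighbourhood $V$ of $C$ with $\partial V$ transverse to ${\mathcal L}$, so $\chi(S\setminus V)=0$ by Poincar\'e--Hopf, hence $\chi(S)=\chi(V)=\ell$, the number of rational curves; since $b_1(S)=1$ gives $\chi(S)=b_2(S)$, the surface carries at least $b_2(S)$ rational curves, and the theorem of Dloussky--Oeljeklaus--Toma \cite{DOT2} (a class ${\rm VII}_\circ^+$ surface with $b_2$ rational curves is a Kato surface) concludes. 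Without this counting argument --- or at the very least without invoking \cite{DOT2} as the closing step instead of a hoped-for direct shell construction --- your outline cannot be completed as written.
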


Our proof of this result has a topological flavour. Remark firstly that $S$ has a natural holomorphic foliation ${\mathcal F}$, arising from the vertical foliation in ${\mathbb D}\times{\mathbb C}$. We shall show that the leaves of ${\mathcal F}$ are contained in the leaves of a {\it real} codimension one foliation ${\mathcal H}$, nonsingular on $S\setminus C$ and with a controlled behaviour around $C$. This will be used to compute the Euler characteristic of $S$, and to show that it is equal to the number of rational curves in $C$. Then the conclusion will follow from the main result of \cite{DOT2}. Our real foliation
${\mathcal H}$ is (a posteriori) equal to the foliation given by the level
sets of the Green function of \cite{DlOe}.

As a companion to the theorem above, it would be interesting to prove that Enoki
surfaces, and their blow-ups, are the only surfaces of class ${\rm VII}^+$ which
contain a Zariski open subset isomorphic to a quotient of ${\mathbb C}\times{\mathbb C}$.

Together with standard results \cite{Nak2} (in particular, Inoue's theorem \cite[\S 3]{Nak2}), the theorem above admits the following consequence.

\begin{corollary}\label{corol}
Let $S$ be a compact connected complex surface of algebraic dimension $0$, and suppose that $S$ contains a Zariski open subset uniformised by ${\mathbb D}\times{\mathbb C}$.
Then $S$ is either as in Theorem \ref{main}, or it is an Inoue surface.
\end{corollary}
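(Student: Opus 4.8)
The plan is to deduce Corollary~\ref{corol} from Theorem~\ref{main} by combining it with the classical Kodaira--Enriques classification together with Inoue's theorem. The hypothesis gives a compact connected surface $S$ of algebraic dimension $a(S)=0$ which contains a Zariski open subset $U=S\setminus C$ whose universal covering is the cylinder ${\mathbb D}\times{\mathbb C}$, where $C$ is the complementary analytic set. First I would pass to a minimal model $S_0$ of $S$ and identify which classes in Kodaira's list can carry such a cylinder quotient as a Zariski open subset; the governing invariants are the algebraic dimension $a(S)=0$ and the first Betti number $b_1(S)$.

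The key structural step is to run through the classification of surfaces with $a(S)=0$ and isolate the possibilities. Since $a(S)=0$, the surface is non-algebraic, so K\"ahler surfaces with $a=0$ (tori and K3 surfaces of algebraic dimension $0$) and non-K\"ahler surfaces with $a=0$ are the only candidates. For tori and K3 surfaces the universal cover of any Zariski open subset cannot be the hyperbolic-type cylinder ${\mathbb D}\times{\mathbb C}$ (the ${\mathbb D}$-factor would force a nonconstant bounded holomorphic map and hence obstruct the Liouville-type behaviour of those covers), so these are excluded. This leaves the non-K\"ahler surfaces of algebraic dimension $0$, whose minimal models are, by Kodaira's classification, either of class ${\rm VII}$ or else of the type handled directly by Inoue's structure theorem. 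I would invoke Inoue's theorem \cite[\S 3]{Nak2} precisely to dispose of the borderline non-${\rm VII}^+$ cases: it characterizes Inoue surfaces among surfaces with $b_2=0$ admitting no curves or the appropriate cohomological data, and it shows that a surface of this shape whose relevant Zariski open subset is a cylinder quotient must be an Inoue surface.

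The remaining case is that the minimal model of $S$ is of class ${\rm VII}_\circ^+$, i.e. $b_1=1$ and $b_2>0$, so that $S$ itself is of class ${\rm VII}^+$. Here the cylinder-uniformisation hypothesis on $S\setminus C$ is exactly the hypothesis of Theorem~\ref{main}, and that theorem then forces $S$ to be an Inoue--Hirzebruch surface or an intermediate Kato surface, possibly blown up along the rational curves; these are subsumed under the clause ``as in Theorem~\ref{main}''. Assembling the three outcomes---excluded K\"ahler cases, the Inoue-surface case from Inoue's theorem, and the class-${\rm VII}^+$ case from Theorem~\ref{main}---yields the dichotomy in the statement.

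The main obstacle I anticipate is the careful bookkeeping at the boundary of the classification, namely verifying that no surface of class ${\rm VII}_\circ^-$ (minimal, $b_1=1$, $b_2=0$) other than an Inoue surface can admit such a Zariski open cylinder quotient, and that the presence of the curve $C$ together with $a(S)=0$ rigidly pins down $b_1(S)=1$ rather than leaving open elliptic or other $a=0$ fibered cases. This is precisely the point where the strength of Inoue's theorem is needed, and where one must check that the Zariski open subset inherited by a minimal model, after contracting exceptional curves inside $C$, still satisfies the cylinder-uniformisation hypothesis so that Theorem~\ref{main} applies verbatim.
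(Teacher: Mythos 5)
Your overall skeleton---run through the classification of minimal surfaces with $a(S)=0$, invoke Theorem \ref{main} for the class ${\rm VII}^+$ case, and keep Inoue surfaces as the surviving $b_2=0$ possibility---is the same as the paper's, but the exclusions you propose for the remaining classes contain genuine gaps, and these are precisely the places where the paper needs a new idea. Your argument for tori and K3 surfaces (``the ${\mathbb D}$-factor would force a nonconstant bounded holomorphic map and hence obstruct the Liouville-type behaviour of those covers'') implicitly assumes that the universal cover of $U=S\setminus C$ sits inside the universal cover of $S$. That is only true when $C$ consists of isolated points, which is the situation for tori of algebraic dimension $0$ (empty maximal divisor). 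A K3 surface of algebraic dimension $0$ can perfectly well contain $(-2)$-curves; if $C$ contains such curves, then $\pi_1(U)$ differs from $\pi_1(S)$, the universal cover of $U$ is in no way an open subset of the (simply connected) K3, and no Liouville-type argument applies. The paper excludes this case by a completely different mechanism, which is its key lemma in Section 5: if a strictly pseudoconvex boundary $M\subset U$ has finite fundamental group, then its finite cover in ${\mathbb D}\times{\mathbb C}$ is a compact hypersurface bounding a bounded, hence pseudoconvex, domain, which projects into $U$; applied to the link $M_j$ of a contractible component $D_j$ of the maximal divisor, this forces $D_j\subset U$, while $U$, being covered by the cylinder, cannot contain a rational curve. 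For a K3 of algebraic dimension $0$ every such $D_j$ is a tree of $(-2)$-curves contracting to an A-D-E singularity, whose link indeed has finite fundamental group, so this argument closes the case. You have no substitute for this step.

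Two further cases are missing. First, Hopf surfaces of algebraic dimension $0$ appear in the classification list, contain one or two elliptic curves, and are not disposed of by Inoue's theorem (which you invoke only vaguely); the paper excludes them, even blown up, because the complement of all their curves is uniformised by ${\mathbb C}^2$, so a Zariski open subset covered by the cylinder would be that complement minus finitely many points, whose universal cover is ${\mathbb C}^2$ minus a discrete set, never ${\mathbb D}\times{\mathbb C}$. Second, the non-minimal cases over tori, K3, Hopf and Inoue surfaces---which you explicitly flag as ``the main obstacle'' but leave unresolved---are handled in the paper by the same pseudoconvexity lemma: a blow-up over a point outside the maximal divisor of the minimal model creates a contractible component of the maximal divisor (an exceptional tree of rational curves) whose link has finite fundamental group, and the lemma again yields a rational curve inside $U$, a contradiction. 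So while your reduction to the classification list and your use of Theorem \ref{main} are correct, the proposal lacks the one technical tool (the pseudoconvex filling argument) on which all the nontrivial exclusions in the paper's proof rest.
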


When $a(S)=1$, the analogous classification problem is quite trivial: every such surface admits an elliptic fibration over a curve, and hence contains a Zariski open subset uniformised by the cylinder, which is saturated by regular fibers of the fibration. When $a(S)=2$, by using Enriques-Kodaira classification (or, better, its logarithmic version, as developed by Sakai, Iitaka, Miyanishi, etc.) one can prove that surfaces with that property are precisely rational, ruled or elliptic ones.

\section{Some preparation}

Let $S$ be a surface of class ${\rm VII}^+$, and let $C\subset S$ be a compact complex curve such that $S\setminus C$ is uniformised by the cylinder ${\mathbb D}\times{\mathbb C}$.

Let us firstly observe that  $S$ cannot be an Enoki surface (or a blow-up of an Enoki surface), because the complement of {\it all} compact complex curves in an Enoki surface is either a ${\mathbb C}$-bundle or a ${\mathbb C}^*$-bundle over an elliptic curve, thus uniformised by ${\mathbb C}^2$ \cite[\S 4.5]{Nak2} (and obviously a larger Zariski open subset cannot be uniformised by ${\mathbb D}\times{\mathbb C}$). From results of Kodaira, Enoki and Nakamura \cite{Enok} \cite[\S 6,7]{Nak2} \cite{Nak1} it then follows that every connected component of $C$ is analytically contractible to one point (its intersection form is negative definite), and moreover it is either a tree of rational curves, or a cycle of rational curves, or an arboreal cycle of rational curves.

The open surface $S\setminus C$ can be presented as
$$S\setminus C = ({\mathbb D}\times{\mathbb C})\big/ \Gamma$$
where $\Gamma\subset {\rm Aut}({\mathbb D}\times{\mathbb C}) $ is the group of deck transformations. Any automorphism of ${\mathbb D}\times{\mathbb C}$ has the form
$$(z,w)\mapsto (\varphi (z), a(z)w+b(z))$$
with $\varphi\in {\rm Aut}({\mathbb D})$, $a\in{\mathcal O}^*({\mathbb D})$ and $b\in{\mathcal O}({\mathbb D})$. We therefore get, by projection, a representation of the group $\Gamma$ into ${\rm Aut}({\mathbb D})$, and $\Gamma$ acts on ${\mathbb D}$ by isometries in the Poincar\'e metric.

We claim that, without loss of generality, we may assume that 
$$no\ orbit\ of\ \Gamma\ on\ {\mathbb D}\ is\ discrete.$$ Indeed, if $\gamma\subset {\mathbb D}$ is a discrete orbit then $(\gamma\times{\mathbb C})/\Gamma$ defines a properly embedded complex curve $L$ in $S\setminus C$. Since every connected component of $C$ is analytically contractible, the closure $\overline L$ of $L$ in $S$ is a compact (rational) curve. But the domain $S\setminus (C\cup \overline L)$ is covered by $({\mathbb D}\setminus\gamma )\times{\mathbb C}$, hence its universal covering is still the cylinder ${\mathbb D}\times{\mathbb C}$. In other words, we can eliminate the discrete orbit by enlarging $C$ to $C\cup\overline L$, and after a finite number of steps we arrive to the desired absence of discrete orbits.

Suppose now that $S$ is not minimal, hence it contains a $(-1)$-curve $E$. If $E$ is not contained in $C$, then $E\cap C$ has cardinality at most $2$, otherwise by contracting $E$ we would obtain a configuration of curves forbidden by \cite{Nak1} (triple points are forbidden). Hence $E\setminus (E\cap C)$ is covered by ${\mathbb C}$, and therefore its preimage in the universal covering of $S\setminus C$ is necessarily of the form $\gamma\times{\mathbb C}$, with $\gamma$ a discrete orbit of $\Gamma$. By the previous paragraph and assumption, this is impossible, and so $E$ must be contained in $C$. But, in that case, we can contract $E$ and we get again a surface of class ${\rm VII}^+$ with a Zariski open subset uniformised by the cylinder. This means that, without loss of generality, we may suppose from now on that 
$$S\ is\ minimal$$ 
that is $S$ is a class ${\rm VII}_\circ^+$ surface.

\begin{lemma}\label{injective}
The above representation $$\Gamma\longrightarrow {\rm Aut}({\mathbb D})$$
is injective.
\end{lemma}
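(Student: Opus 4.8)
The plan is to analyze the kernel $N$ of the representation directly. An element $\gamma\in N$ projects to the identity in ${\rm Aut}({\mathbb D})$, so it preserves every vertical fiber $\{z\}\times{\mathbb C}$ and acts on it by an affine map $w\mapsto a_\gamma(z)w+b_\gamma(z)$. First I would exploit freeness of the $\Gamma$-action: if $a_\gamma(z_0)\neq 1$ for some $z_0$, then $w=b_\gamma(z_0)/(1-a_\gamma(z_0))$ is a fixed point of $\gamma$ in the fiber over $z_0$, which is impossible; hence $a_\gamma\equiv 1$ and $\gamma$ is a \emph{vertical translation} $w\mapsto w+b_\gamma(z)$. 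Similarly, a zero $z_0$ of $b_\gamma$ would make $\gamma$ fix the entire fiber over $z_0$, so $b_\gamma\in{\mathcal O}^*({\mathbb D})$ for $\gamma\neq{\rm id}$. Thus $\gamma\mapsto b_\gamma$ is an injective homomorphism of $N$ into the additive group $({\mathcal O}({\mathbb D}),+)$, and for each fixed $z$ the evaluation $\gamma\mapsto b_\gamma(z)$ is again injective (by freeness on that fiber) and has discrete image $\Lambda_z\subset{\mathbb C}$ (by proper discontinuity along the fiber). Consequently $N$ is free abelian of rank $r\in\{0,1,2\}$, and I want to exclude $r=1$ and $r=2$.

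Next I would use that $N$ is normal in $\Gamma$. Conjugating a translation by a general $\gamma\in\Gamma$, with linear part $a_\gamma(z)$ over $\varphi_\gamma\in{\rm Aut}({\mathbb D})$, shows $a_\gamma(z)\,\Lambda_z=\Lambda_{\varphi_\gamma(z)}$. For $r=1$ this only says that $\Lambda_z={\mathbb Z}\,b_0(z)$ is carried to ${\mathbb Z}\,b_0(\varphi_\gamma(z))$, and rescaling the fiber coordinate by $b_0(z)$ already trivializes it. For $r=2$ the $j$-invariant $j(\Lambda_z)$ is constant along every orbit of the projected group $\bar\Gamma=\Gamma/N$ acting on ${\mathbb D}$; since no orbit of $\Gamma$ (equivalently of $\bar\Gamma$) on ${\mathbb D}$ is discrete, such an orbit accumulates, so the holomorphic function $z\mapsto j(\Lambda_z)$ is constant by the identity principle, and rescaling by $b_1(z)$ makes $\Lambda_z$ constant. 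In either case the intermediate covering $\widehat S=({\mathbb D}\times{\mathbb C})/N$ is biholomorphic to a \emph{trivial} bundle ${\mathbb D}\times F$ over ${\mathbb D}$, with fiber $F$ an elliptic curve $E$ when $r=2$ and $F={\mathbb C}^*$ when $r=1$.

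When $r=2$ this already contradicts $a(S)=0$. The image $L_z\subset S\setminus C$ of a single compact fiber $\{z\}\times E$ under the covering $\widehat S\to S\setminus C$ is a compact analytic curve, and it is an embedded copy of $E$ whenever the $\bar\Gamma$-stabilizer of $z$ is trivial. Because $\bar\Gamma$ is countable, the holomorphic map $z\mapsto L_z$ into the space of cycles of $S$ has countable fibers, hence is nonconstant; this produces a positive-dimensional family of compact curves in $S$, forcing $a(S)\geq 1$ and contradicting the fact that a class ${\rm VII}_\circ^+$ surface has algebraic dimension $0$.

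The main obstacle is the case $r=1$, where the leaves $\{z\}\times{\mathbb C}^*$ are \emph{non-compact}, so the family-of-compact-curves argument is unavailable. Here $\bar\Gamma$ acts on $\widehat S={\mathbb D}\times{\mathbb C}^*$ by maps of the form $(z,\zeta)\mapsto(\varphi_\gamma(z),\zeta^{\pm 1}\mu_\gamma(z))$ with $\mu_\gamma\in{\mathcal O}^*({\mathbb D})$, and two features resist a quick conclusion: the multiplicative automorphy factors $\mu_\gamma$, together with the inversions $\zeta\mapsto\zeta^{-1}$, obstruct the naive descent of the fiber coordinate to an invariant meromorphic function, and the base action on ${\mathbb D}$ has dense orbits, so the quotient ${\mathbb D}/\bar\Gamma$ is not a usable base. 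To close this case I would aim to show that each ${\mathbb C}^*$-leaf has compact closure in $S$, necessarily a rational curve, so that one again obtains a positive-dimensional family of compact curves and hence $a(S)\geq 1$; the delicate point is controlling the two ends of the leaf as they approach the contractible configuration $C$. Any nonconstant meromorphic function so produced on $S\setminus C$ does extend across $C$: contracting the negative-definite components of $C$ gives a normal surface in which $C$ becomes finitely many points, and meromorphic functions extend across points of a normal surface, so the function pulls back to a nonconstant meromorphic function on $S$, again contradicting $a(S)=0$.
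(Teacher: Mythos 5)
Your analysis of the kernel $N$ is sound and matches the paper's starting point: freeness forces the kernel to consist of vertical translations with nonvanishing $b_\gamma$, proper discontinuity gives the rank bound $r\le 2$, and your treatment of $r=2$ is a correct (if different) argument --- the paper simply quotes the Enoki--Nakamura structure results, which forbid elliptic curves on a non-Enoki class ${\rm VII}_\circ^+$ surface, whereas you produce uncountably many compact curves and invoke finiteness of curves when $a(S)=0$; either route works. The genuine gap is the case $r=1$, which you explicitly leave open, and which is where essentially all of the content of the lemma lies. Your proposed strategy --- show that each leaf $\{z\}\times{\mathbb C}^*$ has compact closure in $S$, necessarily a rational curve, and then rerun the family-of-curves argument --- is not carried out, and there is a real obstruction to carrying it out: the image of $\{z\}\times{\mathbb C}^*$ in $S\setminus C$ is only an injectively immersed, noncompact leaf of ${\mathcal F}_0$, and the closure of such a leaf has no reason to be an analytic set (Remmert's proper mapping theorem, which is what makes your $r=2$ argument work, is unavailable for noncompact fibers). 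Controlling how the two ends of such a leaf approach the contractible configuration $C$ is essentially as hard as the lemma itself, so this sketch does not amount to a proof.

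The paper closes the case $r=1$ with an idea your setup almost reaches but never uses: on ${\mathbb D}\times{\mathbb C}^*$ every element $g$ of $\Gamma_0=\Gamma/N$ satisfies $g^*\bigl(\zeta\frac{\partial}{\partial\zeta}\bigr)=\pm\,\zeta\frac{\partial}{\partial\zeta}$ (the sign being your exponent $\pm 1$), so either this vector field descends to $S\setminus C$, giving a holomorphic ${\mathbb C}^*$-action there, or it descends on a double cover. One then contracts the components of $C$ to points, extends the action across the resulting normal singularities by Hartogs, and lifts it back to $S$ (which is the minimal resolution, as $C$ contains no $(-1)$-curves); this produces a nontrivial ${\mathbb C}^*$-action on a class ${\rm VII}_\circ^+$ surface, and by Hausen's classification \cite{Haus} such a surface must be an Enoki surface of special type, which was excluded at the outset (in the double-cover case one gets instead that $S\setminus C$ is uniformised by ${\mathbb C}^2$, a contradiction). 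Some compactness-plus-classification input of this kind is what your proposal is missing; without it, or a genuine replacement for it, the case $r=1$ remains unproved.
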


\begin{proof}
Let $H\subset\Gamma$ be the normal subgroup of elements which act identically on ${\mathbb D}$. Since the action of $\Gamma$ on ${\mathbb D}\times{\mathbb C}$ is free and properly discontinuous, and there are no elliptic curves in the quotient, we see that $H$ is either trivial (in which case the proof is done) or infinite cyclic and generated by a translation $(z,w)\mapsto (z,w+1)$ (up to conjugation). In the latter case, the universal covering of $S\setminus C$ factorizes as
$${\mathbb D}\times{\mathbb C}\to {\mathbb D}\times{\mathbb C}^*\to S\setminus C,$$
where the first factor corresponds to the quotient by $H$, so that 
$$S\setminus C = ({\mathbb D}\times{\mathbb C}^*)\big/ \Gamma_0$$
with $\Gamma_0\subset {\rm Aut}({\mathbb D}\times{\mathbb C}^*)$, $\Gamma_0\simeq \Gamma/H$.

Set $\Gamma_0^+=\{ g\in\Gamma_0\ | \ g^*(w\frac{\partial}{\partial w}) = w\frac{\partial}{\partial w}\}$. We have either $\Gamma_0^+=\Gamma_0$, or $\Gamma_0^+$ is a normal subgroup of $\Gamma_0$ of index 2, since every $g\in\Gamma_0$ satisfies  $g^*(w\frac{\partial}{\partial w}) =\pm w\frac{\partial}{\partial w}$. Let us firstly consider the case $\Gamma_0^+=\Gamma_0$. Then we get on $S\setminus C$ a holomorphic vector field $v$, the quotient of $w\frac{\partial}{\partial w}$, generator of a ${\mathbb C}^*$-action. 

We claim that this ${\mathbb C}^*$-action extends holomorphically to $S$. Indeed, consider firstly the normal singular surface $S_0$ obtained by contracting to one point each connected component of $C$, i.e. by compactifying each end of $S\setminus C$ by a single point. The ${\mathbb C}^*$-action clearly extends to $S_0$ (Hartogs), and each singular point is a fixed point of the extended action. Observe now that, since $S$ is minimal, the map $S\to S_0$ coincides with the minimal desingularisation of $S_0$: $C$ contains no $(-1)$-curve. This implies that any ${\mathbb C}^*$-action on $S_0$ can be holomorphically lifted to $S$ (it can be lifted meromorphically, but the absence of $(-1)$-curves in $C$ and the factorization theorem for bimeromorphic maps imply that the lifting is actually holomorphic).

Now, by \cite{Haus} the only surfaces of class ${\rm VII}_\circ^+$ admitting a nontrivial ${\mathbb C}^*$-action are Enoki surfaces (of special type), which is not our case.

When $\Gamma_0^+$ has index 2 in $\Gamma_0$, we then find a double covering of $S\setminus C$ with a ${\mathbb C}^*$-action. By contractibility of $C$, this double covering extends to a (ramified) double covering $S'$ of $S$; denote by $C'$ the preimage of $C$ in $S'$. As before, we obtain that $S'$ is an Enoki surface (if $C'$ contains some $(-1)$-curve, we contract it), hence $S'\setminus C'$ is uniformised by ${\mathbb C}^2$, as well as $S\setminus C$, a contradiction.
\end{proof}  

On $S\setminus C$ we have a holomorphic foliation ${\mathcal F}_0$, arising from the quotient of the vertical foliation of ${\mathbb D}\times{\mathbb C}$, which is $\Gamma$-invariant. By contractibility of $C$, this foliation extends to a (singular) holomorphic foliation ${\mathcal F}$ on $S$. Obviously, every leaf of ${\mathcal F}_0$ is isomorphic either to ${\mathbb C}$ or to ${\mathbb C}^*$, and leaves isomorphic to ${\mathbb C}^*$ correspond to orbits of $\Gamma$ on ${\mathbb D}$ with a nontrivial (cyclic) stabilizer, giving the holonomy of the leaf.

\begin{lemma}\label{leaves}
The foliation ${\mathcal F}_0$ has at most countably many leaves isomorphic to ${\mathbb C}^*$, and each one of these leaves has holonomy generated by an irrational rotation.
\end{lemma}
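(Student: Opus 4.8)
The plan is to read off both assertions from the structure of the point stabilizers of the $\Gamma$-action on ${\mathbb D}$. First I would recall, as noted just before the lemma, that a fiber $\{z\}\times{\mathbb C}$ projects to a leaf isomorphic to ${\mathbb C}^*$ precisely when the stabilizer $\Gamma_z=\{\gamma\in\Gamma:\varphi_\gamma(z)=z\}$ is nontrivial, in which case it is infinite cyclic, say $\Gamma_z=\langle\gamma\rangle$. I would then pin down how such a $\gamma$ acts. Writing $\gamma(z,w)=(\varphi_\gamma(z),a_\gamma(z)w+b_\gamma(z))$, the condition $\varphi_\gamma(z)=z$ says that $\gamma$ preserves the fiber $\{z\}\times{\mathbb C}$ and acts there by the affine map $w\mapsto a_\gamma(z)w+b_\gamma(z)$. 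Since the $\Gamma$-action on ${\mathbb D}\times{\mathbb C}$ is free, this affine map has no fixed point, which forces $a_\gamma(z)=1$ and $b_\gamma(z)\neq 0$; thus $\gamma$ acts on the fiber as a nontrivial translation (consistent with the quotient being ${\mathbb C}^*={\mathbb C}/{\mathbb Z}$), while on ${\mathbb D}$ the map $\varphi_\gamma$ fixes the interior point $z$ and is therefore an elliptic isometry, i.e. a rotation about $z$.

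For the holonomy statement, I would identify the holonomy of the ${\mathbb C}^*$-leaf, computed along the generator of $\pi_1({\mathbb C}^*)\cong{\mathbb Z}$, with the germ at $z$ of the transverse return map, which is exactly $\varphi_\gamma$. Conjugating by a biholomorphism of ${\mathbb D}$ carrying $z$ to the origin turns $\varphi_\gamma$ into a genuine rotation $t\mapsto e^{i\theta}t$. It then remains to exclude the rational case, and here the key leverage is the injectivity of $\Gamma\to{\rm Aut}({\mathbb D})$ established in Lemma \ref{injective}: if $\theta/2\pi$ were rational, then $\varphi_\gamma$ would have some finite order $n$, so $\varphi_{\gamma^n}={\rm id}$ on ${\mathbb D}$; injectivity would then give $\gamma^n={\rm id}$ in $\Gamma$, contradicting that $\gamma$ generates the infinite cyclic group $\Gamma_z$. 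Hence $\theta/2\pi$ is irrational and the holonomy is generated by an irrational rotation.

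For the countability, I would use that $\Gamma$, being isomorphic to the fundamental group of the manifold $S\setminus C$, is countable. Every ${\mathbb C}^*$-leaf arises from a $\Gamma$-orbit of a point $z$ with nontrivial stabilizer, and by the above each such $z$ is the fixed point in ${\mathbb D}$ of some elliptic $\varphi_\gamma$ with $\gamma\neq{\rm id}$. An elliptic isometry of ${\mathbb D}$ has a single interior fixed point, so the set of such $z$ injects into $\Gamma\setminus\{{\rm id}\}$ and is therefore countable; a fortiori there are at most countably many such orbits, that is, at most countably many ${\mathbb C}^*$-leaves. I expect the only genuinely delicate point to be the clean identification of the leaf's holonomy with the germ of $\varphi_\gamma$ at $z$, together with the verification that this germ is an honest (linearizable) rotation rather than merely a topological one; once that is secured, the injectivity of the representation does all the remaining work.
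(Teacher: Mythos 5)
Your proposal is correct and follows essentially the same route as the paper's proof: both use freeness of the action to see that the stabilizer's generator acts on the fiber as a nontrivial translation (hence has infinite order in $\Gamma$), then invoke Lemma \ref{injective} to transfer this infinite order to $\varphi_\gamma$ and conclude the rotation is irrational, with countability coming from the countability of $\Gamma$ and the uniqueness of an elliptic fixed point. The paper is merely terser, normalizing the fixed point to $0\in{\mathbb D}$ and noting $g^k(0,w)=(0,w+k)$ in place of your stabilizer-group formulation, and it takes the identification of the holonomy with the germ of $\varphi_\gamma$ for granted (as stated just before the lemma), exactly as you do.
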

 
\begin{proof}
Let $L$ be a leaf isomorphic to ${\mathbb C}^*$, say the one corresponding to the orbit of $0\in{\mathbb D}$. We thus have an element $g\in\Gamma$ which acts as $(z,w)\mapsto (\varphi (z),a(z)w+b(z))$, with $\varphi (0)=0$ (i.e., a rotation) and $a(0)=1$, $b(0)=1$ (up to conjugation). Remark that, for every $k\in{\mathbb Z}^*$, $g^k$ is not the identity in $\Gamma$, since $g^k(0,w)=(0,w+k)$. Thus, by the previous lemma, $\varphi^k$ is not the identity, for every $k\in{\mathbb Z}^*$, and this precisely means that $\varphi$ is an irrational rotation. This property implies also the countability of ${\mathbb C}^*$-leaves.
\end{proof}

We refer to \cite{Brun} for standard facts concerning the tangent bundle and the canonical bundle of a singular foliation.

\begin{lemma} \label{nosection}
The canonical bundle $K_{\mathcal F}$ of the foliation ${\mathcal F}$ has no nontrivial global holomorphic section:
$$h^0(S,K_{\mathcal F})=0.$$
\end{lemma}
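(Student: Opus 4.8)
The plan is to pass to the covering $\mathbb{D}\times\mathbb{C}$ and to exploit the transverse hyperbolic structure of $\mathcal{F}$. A global section $\omega$ of $K_{\mathcal{F}}=T_{\mathcal{F}}^{*}$ pulls back to a leafwise $1$-form $\tilde\omega=h(z,w)\,dw$, where $h$ is holomorphic on $\mathbb{D}\times\mathbb{C}$; since $\partial_w$ transforms by the cocycle $a_g(z)$, the datum $h$ satisfies $h(\varphi_g(z),a_g(z)w+b_g(z))\,a_g(z)=h(z,w)$ for every $g\in\Gamma$, so that $K_{\mathcal{F}}$ has transition functions $a_g(z)^{-1}$, depending on the transverse variable alone. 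Recalling $T_{\mathcal{F}}\otimes N_{\mathcal{F}}=K_S^{-1}$, one has $K_{\mathcal{F}}=K_S\otimes N_{\mathcal{F}}$, where the conormal bundle is generated by $dz$ with cocycle $\varphi_g'(z)$. As $\Gamma$ acts on $\mathbb{D}$ by Poincar\'e isometries, the Poincar\'e metric descends to a metric on $N_{\mathcal{F}}$ over $S\setminus C$ whose curvature is strictly negative transversally and vanishes along the leaves; thus $N_{\mathcal{F}}^{*}$ is leafwise flat and transversally positive, and $K_{\mathcal{F}}$ restricts to each leaf as its (trivial) canonical bundle.

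First I would show that a nonzero $\omega$ cannot vanish on a generic leaf. Each leaf is $\mathbb{C}$ or $\mathbb{C}^{*}$ by Lemma \ref{leaves}, hence parabolic, and on a parabolic Riemann surface a subharmonic function bounded from above is constant. Measuring $\omega$ with a metric on $T_{\mathcal{F}}$ built from the transverse Poincar\'e metric, $\log|h|$ is leafwise subharmonic away from the zeros of $\omega$; controlling it from above along generic leaves, parabolicity forces $|h|$ to be constant there, so $\omega$ is leafwise nowhere zero. Consequently the compact divisor $(\omega)_0$ meets no generic leaf, i.e. it is a union of leaves; but every leaf of $\mathcal{F}_0$ in $S\setminus C$ is noncompact, so $(\omega)_0$ must be supported on $C$. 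It then remains to exclude the two surviving possibilities: that $(\omega)_0$ is a nonzero divisor supported on the contractible curve $C$, or that $(\omega)_0=0$.

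In the second case $K_{\mathcal{F}}\cong\mathcal{O}_S$, hence $T_{\mathcal{F}}\cong\mathcal{O}_S$ and there is a global holomorphic vector field $v$, tangent to $\mathcal{F}$ and nonvanishing on $S\setminus C$; writing $\tilde v=s(z,w)\,\partial_w$ with $s$ nowhere zero and $s(\varphi_g z,\ldots)=a_g(z)\,s(z,w)$, one is led to a trivialisation exhibiting the cocycle $a_g$ as a coboundary, that is to the situation $\Gamma_0^{+}=\Gamma_0$ of the proof of Lemma \ref{injective}, producing a nontrivial $\mathbb{C}^{*}$-action on $S$ and hence, by \cite{Haus}, forcing $S$ to be an Enoki surface, against our assumption. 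The first case I would treat through the positivity above: since $c_1(N_{\mathcal{F}}^{*})$ is represented by a positive transverse current, a nonzero effective $(\omega)_0$ would make $c_1(K_S)=c_1(K_{\mathcal{F}})+c_1(N_{\mathcal{F}}^{*})$ pseudoeffective, which is impossible for a class ${\rm VII}_\circ^+$ surface. The main obstacle is precisely the behaviour along $C$: the transverse Poincar\'e metric degenerates and $\mathcal{F}$ becomes singular there, so the upper bound on $\log|h|$ needed for the maximum principle, the extension of the transverse current across $C$, and the coboundary reduction for $v$ all hinge on a careful analysis of the local models of $\mathcal{F}$ near the cyclic or arboreal configuration $C$ --- exactly where the finer description of $C$ announced in the introduction enters.
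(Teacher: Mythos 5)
Your proposal does not close the proof; each of its three pillars has a genuine gap. First, the leafwise Liouville step: there is no ``metric on $T_{\mathcal F}$ built from the transverse Poincar\'e metric'' --- the Poincar\'e metric of ${\mathbb D}$ lives on the normal bundle $N_{\mathcal F}$, not on the tangent direction of the leaves, so $|h|$ is only defined upstairs on ${\mathbb D}\times{\mathbb C}$, where it is not $\Gamma$-invariant (it transforms by the non-unitary cocycle $a_g$). Even upstairs, for fixed $z$ the function $h(z,\cdot)$ is an arbitrary entire function; parabolicity of the leaf yields constancy only if $\log|h|$ is bounded above, and no such bound is available, because the leaves are noncompact and accumulate on $C$ in an uncontrolled way. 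You concede this yourself in your closing sentence, which means the key reduction ``$(\omega)_0$ is supported on $C$'' is not established. Second, in the case $(\omega)_0=0$, a nowhere vanishing section of $T_{\mathcal F}$ generates a ${\mathbb C}$-action (a flow), not a ${\mathbb C}^*$-action; the ``situation $\Gamma_0^+=\Gamma_0$ of Lemma \ref{injective}'' cannot be invoked, since that situation presupposed a nontrivial kernel $H$ and the intermediate covering ${\mathbb D}\times{\mathbb C}^*$, which is exactly what Lemma \ref{injective} rules out, and \cite{Haus} classifies ${\mathbb C}^*$-surfaces, saying nothing about a ${\mathbb C}$-flow. Third, in the case $(\omega)_0\neq 0$ supported on $C$, the assertion that $c_1(K_S)$ cannot be pseudoeffective on a class ${\rm VII}_\circ^+$ surface is an unproved and delicate claim: since $b_1(S)$ is odd, nonzero closed positive currents can be exact (the invariant current $T$ of this very paper is exact, by \cite{Toma}), so pseudoeffectivity must be handled in Bott--Chern cohomology, and one would also have to show that the transverse current computes $c_1(N^*_{\mathcal F})$ across $C$ without Lelong-number corrections; none of this is standard or supplied.

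For comparison, the paper's proof bypasses all of this by applying the vector-field dualization --- which you use only in one subcase --- to the arbitrary zero divisor $D=\{s=0\}$: by Enoki--Nakamura every connected component of $D$ is contractible; contracting gives a normal surface $S_0$ of which $S$ is the minimal desingularization; the dual nonvanishing vector field on $S\setminus D$ extends to $S_0$ by normality, and the resulting ${\mathbb C}$-action lifts back to $S$, so $h^0(S,T_{\mathcal F})>0$ as well; having nonzero sections of both $K_{\mathcal F}$ and $T_{\mathcal F}$ forces both bundles to be trivial, hence $D=\emptyset$ and the vector field vanishes only at the isolated singularities of ${\mathcal F}$, contradicting \cite{Haus} and \cite[Th.3.2]{DOT1}, by which every holomorphic vector field on a ${\rm VII}_\circ^+$ surface must vanish on a curve. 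If you wish to salvage your case $(\omega)_0=0$, replace the ${\mathbb C}^*$ argument by exactly this citation; but the other two steps need to be replaced, not repaired.
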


\begin{proof}
By contradiction, let $s\in H^0(S,K_{\mathcal F})$, $s\not\equiv 0$. Each connected component of its zero set $D=\{ s=0\}$ is contractible to one point, again by Enoki's and Nakamura's results, and if $S_0$ denotes the normal surface obtained after those contractions, then $S\to S_0$ coincides with the minimal desingularization of $S_0$, since $S$ is minimal. Outside of $D$, the canonical bundle $K_{\mathcal F}$ is trivial, hence its dual $T_{\mathcal F}$ is trivial too, and therefore we get a nonvanishing holomorphic vector field $v$ on $S\setminus D$.

This vector field extends to a holomorphic vector field $v_0$ on $S_0$, vanishing at the singular points, by normality (since $S_0$ is singular, it is useful to think to vector fields as derivations of holomorphic functions). This $v_0$ generates a ${\mathbb C}$-action on $S_0$, fixing the singular points, which, as in Lemma \ref{injective}, can be lifted to $S$. Equivalently, the vector field $v$ on $S\setminus D$ holomorphically extends to $S$.
But this means that $h^0(S,T_{\mathcal F})>0$, and together with $h^0(S,K_{\mathcal F})>0$ this implies that $T_{\mathcal F}$ and $K_{\mathcal F}$ are holomorphically trivial.

Therefore $D=\emptyset$ and the above 
vector field on $S$ vanishes only at the (isolated) singularities of the foliation. Now, this contradicts results of \cite{Haus} and \cite[Th.3.2]{DOT1}, according to which any holomorphic vector field on a ${\rm VII}_0^+$-surface must vanish on some curve.
\end{proof}

\section{Construction of a real foliation}

We take the infinite cyclic covering 
$$\pi : \widehat S \longrightarrow S$$
which is provided, in a unique way, by the property $b_1(S)=1$.  We shall denote by $$\Phi : \widehat S \longrightarrow \widehat S$$ a generator of the deck transformations. If we set $$\widehat C = \pi^{-1}(C)$$
then we have
$$\widehat S\setminus\widehat C = ({\mathbb D}\times{\mathbb C})\big/ \widehat\Gamma$$
where $\widehat\Gamma$ is a normal subgroup of $\Gamma$ and $\Gamma / \widehat\Gamma \simeq{\mathbb Z}$. Indeed, the covering $\pi$ corresponds to a homomorphism $\pi_1(S)\to{\mathbb Z}$, which restricts to a homomorphism $\pi_1(S\setminus C)\to {\mathbb Z}$, and then $\widehat\Gamma\subset\Gamma = \pi_1(S\setminus C)$ is precisely the Kernel of the latter. Moreover, $\Gamma / \widehat\Gamma$ is naturally identified with the group of automorphisms of $\widehat S\setminus\widehat C$ generated by the restriction 
$\Phi : \widehat S\setminus\widehat C \to \widehat S\setminus\widehat C$. We will think to $\Phi$ as an element of ${\rm Aut}({\mathbb D}\times{\mathbb C})$, whose action preserves every $\widehat\Gamma$-orbit and sends $\Gamma$-orbits to $\Gamma$-orbits.

Let us denote by $\Gamma_{\mathbb D}$ and $\widehat\Gamma_{\mathbb D}$ the images of $\Gamma$ and $\widehat\Gamma$ into ${\rm Aut}({\mathbb D})$, and note that by Lemma \ref{injective} these are isomorphic images. We have an exact sequence
$$0\to\widehat\Gamma_{\mathbb D} \to \Gamma_{\mathbb D} \to {\mathbb Z} \to 0$$
where ${\mathbb Z}$ is generated by $\varphi\in{\rm Aut}({\mathbb D})$, covered by $\Phi\in{\rm Aut}({\mathbb D}\times{\mathbb C})$.

Let $\omega_{\rm hyp}\in A^{1,1}({\mathbb D})$ be the area form of the Poincar\'e metric on ${\mathbb D}$. Its pull-back to ${\mathbb D}\times{\mathbb C}$ is an ${\rm Aut}({\mathbb D}\times{\mathbb C})$-invariant $(1,1)$-form, hence it induces a smooth $(1,1)$-form on the quotient $S\setminus C$, which
is obviously closed and semipositive (and its Kernel is the foliation ${\mathcal F}_0$). Since $C$ is analytically contractible, this $(1,1)$-form extends, by zero, to a closed positive current $T\in A^{1,1}(S)'$ \cite{Shif}. By construction, $T$ is {\it invariant} by ${\mathcal F}$, that is it can be thought as an integration current along the leaves of ${\mathcal F}$ with respect to an holonomy invariant transverse measure \cite[p.42]{Brun}. Generally speaking, $T$ may fail to be smooth along the curve $C$, even if its generic Lelong number along any irreducible component of $C$ is equal to zero (since the extension is done by zero).

We can pull-back $T$ to $\widehat S$, by pulling back its local potentials, and we get a closed positive current $\widehat T\in A^{1,1}(\widehat S)'$. It is of course equal to the extension by zero of the smooth $(1,1)$-form on $\widehat S\setminus\widehat C$ which arises from the Poincar\'e area form on ${\mathbb D}$. It is invariant by the foliation $\widehat{\mathcal F} = \pi^*({\mathcal F})$.

By construction, the algebraic part of the Siu decomposition of $T$ is identically zero. This implies that $T$ is not only closed but even exact \cite[Rem.8]{Toma}, and moreover $\widehat T$ is $dd^c$-exact \cite[Prop.4]{Toma}\footnote{The statement in \cite{Toma} involves the universal covering of $S$, but it holds also on our covering $\widehat S$ because every closed 1-form on $S$ becomes exact when lifted to $\widehat S$; equivalently, the representation of $\pi_1(S)$ appearing in \cite{Toma} obviously factorizes through $H_1(S,{\mathbb R})$.}. More precisely, there exists a plurisubharmonic function $F$ on 
$\widehat S$ such that
$$\widehat T = dd^cF$$
and 
$$F\circ\Phi = F+\lambda$$
for a suitable nonzero $\lambda\in{\mathbb R}$. Remark that, by elliptic regularity, $F$ is smooth outside $\widehat C$. A posteriori, $F$ will be something like $-\log (-H)$, where $H$ is the Green function of \cite{DlOe}.

\begin{lemma} \label{constant}
The function $F$ is constant along the leaves of $\widehat{\mathcal F}$.
\end{lemma}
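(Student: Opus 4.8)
The plan is to pass to the covering $\mathbb D\times\mathbb C$ and reduce the statement to a Liouville property on the (parabolic) leaves.

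First I would lift $F$ to a plurisubharmonic function $\widetilde F$ on $\mathbb D\times\mathbb C$ (pulling it back through $\widehat S\setminus\widehat C=(\mathbb D\times\mathbb C)/\widehat\Gamma$; $F$ is smooth there). The leaves of $\widehat{\mathcal F}$ lift to the vertical lines $\{z\}\times\mathbb C$, so the assertion becomes that $\widetilde F$ depends only on $z$. Now $\widehat T=dd^cF$ lifts to the pull-back of the Poincar\'e area form $\omega_{\rm hyp}$, and $\omega_{\rm hyp}$ is itself pulled back from $\mathbb D$; in particular it annihilates $\partial/\partial w$ and $\partial/\partial\bar w$. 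Writing $\omega_{\rm hyp}=dd^cu$ with a potential $u=u(z)$ depending only on $z$ (e.g. $u=-\log(1-|z|^2)$ up to normalisation), the function $G=\widetilde F-u$ is pluriharmonic, so on each leaf
$$\widetilde F(z_0,w)=u(z_0)+\mathrm{Re}\,h(z_0,w),$$
where $w\mapsto h(z_0,w)$ is entire. Thus $\widetilde F$ is \emph{harmonic along every leaf}: this first step is immediate from the transversality of the invariant current $\widehat T$.

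It then suffices to bound $\widetilde F(z_0,\cdot)$ from above on $\mathbb C$: if $\mathrm{Re}\,h(z_0,\cdot)$ is bounded above then $e^{h(z_0,\cdot)}$ is a bounded entire function, hence constant, and so is $\widetilde F(z_0,\cdot)$. To obtain such a bound I would argue near $\widehat C$. Since $T$ is the smooth transverse form extended by zero, with vanishing generic Lelong numbers, its potential $F$ is \emph{locally bounded above} near every point of $\widehat C$ (being plurisubharmonic it is in any case locally bounded above on $\widehat S$). Hence, as soon as one knows that the end of the leaf (i.e. $w\to\infty$) accumulates on a \emph{compact} subset of $\widehat C$, the harmonic function $\widetilde F(z_0,\cdot)$ is bounded above and we are done. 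Here it helps that the irreducible components of $\widehat C$ are rational curves: being simply connected, each such curve lifts to $\widehat S$ as a disjoint union of \emph{compact} copies, so accumulation on finitely many of them is automatically accumulation on a compact set. Finally, once the bound is available on a dense set of leaves, continuity of $F$ off $\widehat C$ propagates the constancy to all of them, giving $\widetilde F=\widetilde F(z)$.

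The hard part will be precisely this confinement: one must rule out that a single leaf, as $w\to\infty$, winds through infinitely many $\Phi$-translates of $\widehat C$. This is a genuine issue, because $F\circ\Phi=F+\lambda$ with $\lambda\neq0$, so $F$ is unbounded above on all of $\widehat S$ and no global sup is available; the whole point is to show that one leaf stays within finitely many levels of the $\mathbb Z$-covering. I expect to extract this from the dynamics of $\widehat\Gamma_{\mathbb D}$ on $\mathbb D$, using the standing assumption that no orbit is discrete. A variant that bypasses the accumulation analysis is to use non-discreteness directly: choosing $\gamma_n\in\widehat\Gamma$ with $\varphi_{\gamma_n}(z_0)\to z_0$ and passing to the limit in the invariance relation $\widetilde F=\widetilde F\circ\gamma_n$ produces a nontrivial affine self-map $w\mapsto\alpha w+\beta$ of the leaf under which $\widetilde F(z_0,\cdot)$ is invariant; together with Lemma \ref{leaves} (irrational-rotation holonomy on the special leaves), this rigidity again forces $\widetilde F(z_0,\cdot)$ to be constant.
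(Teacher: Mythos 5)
Your first step is correct and coincides with the paper's starting point: since $\widehat T=dd^cF$ is the pull-back of a form living on the $z$-variable only, the leafwise derivative $F_w$ is holomorphic (equivalently, $\widetilde F-u(z)$ is pluriharmonic), so $F$ is harmonic along leaves. But from there the proposal has a genuine gap that you yourself flag and do not close: the Liouville argument needs $\widetilde F(z_0,\cdot)$ to be bounded above on $\mathbb{C}$, and this requires knowing that each leaf stays in a compact part of $\widehat S$ rather than escaping towards the ends of the cyclic covering, where $F\to+\infty$ because $F\circ\Phi=F+\lambda$. Nothing available at this stage of the paper rules out such escape; confinement of leaves (compactness of the sets $\{F\le s\}$ intersected with leaf closures, say) is essentially equivalent to the qualitative structure that is only established \emph{after} Lemma \ref{constant}, so the argument as sketched is circular at its crucial point. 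The paper avoids this entirely by globalizing instead of lifting: $\partial_{\widehat{\mathcal F}}F$ is a holomorphic section of $K_{\widehat{\mathcal F}}$ (extended across ${\rm Sing}(\widehat{\mathcal F})$ by Hartogs), it is $\Phi$-invariant because $\Phi$ changes $F$ only by the additive constant $\lambda$, hence it descends to a holomorphic section of $K_{\mathcal F}$ on the compact surface $S$, which must vanish by Lemma \ref{nosection}. That vanishing lemma --- which rests on the results of Hausen and Dloussky--Oeljeklaus--Toma on vector fields --- is the real engine of the proof, and your proposal never invokes it.

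The fallback ``variant'' is also flawed. Since $\widehat\Gamma$ acts freely and properly discontinuously on $\mathbb{D}\times\mathbb{C}$, if $\varphi_{\gamma_n}(z_0)\to z_0$ with $\gamma_n$ distinct then the affine parts $w\mapsto a_n(z_0)w+b_n(z_0)$ must diverge in the affine group of $\mathbb{C}$ --- no subsequence converges, so there is no limit self-map $w\mapsto\alpha w+\beta$; this compensation (nearly trivial action on $\mathbb{D}$, wild action on the fibre) is exactly the mechanism that makes Inoue-type constructions possible. Moreover, even if such an invariant affine map existed, invariance of a harmonic function under a \emph{translation} does not force constancy (${\rm Re}\,e^{2\pi iw}$ is $1$-periodic); only rotation-invariance would do, and Lemma \ref{leaves} supplies no rotations acting on the leaf itself: it concerns the at most countably many $\mathbb{C}^*$-leaves, whose stabilizer elements act on the leaf as translations, the irrational rotation being the \emph{transverse} holonomy. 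Note finally that the invariance relation $\widetilde F=\widetilde F\circ\gamma$ holds only for $\gamma\in\widehat\Gamma$, while the standing ``no discrete orbit'' assumption concerns $\Gamma$, so even the non-discreteness input you want is not directly available.
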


\begin{proof}
Let us consider $\partial_{\widehat{\mathcal F}}F = \partial F\vert_{\widehat{\mathcal F}}$ around a nonsingular point of the foliation. In local coordinates $(z,w)$ such that the foliation is given by $dz=0$, that $(1,0)$-form on the leaves is expressed by $F_wdw$. But, in the same coordinates, $\widehat T = \mu\cdot{\rm i}dz\wedge d\overline z$, since it is ${\widehat{\mathcal F}}$-invariant, and therefore for its potential $F$ the partial derivatives  $F_{w\overline w}$, $F_{w\overline z}$ and $F_{z\overline w}$ are all identically zero. That is, the function $F_w$ is holomorphic.
Thus $\partial_{\widehat{\mathcal F}}F$ defines a holomorphic section of $K_{\widehat{\mathcal F}}$ outside the singularities of the foliation, and consequently everywhere by Hartogs.

Since $F\circ\Phi = F + \lambda$, that section descends to $S$, as a holomorphic section of $K_{\mathcal F}$. By Lemma \ref{nosection}, this last section must be identically zero, and by construction this means that $F$ is constant along the leaves.
\end{proof}

As a consequence of this lemma, the function $F\vert_{\widehat S\setminus\widehat C}$ lifts to the universal covering ${\mathbb D}\times{\mathbb C}$ as a $\widehat\Gamma$-invariant smooth function depending only on the first variable. Thus, we get a $\widehat\Gamma_{\mathbb D}$-invariant function $f\in C^\infty ({\mathbb D})$ such that 
$$dd^cf = \omega_{\rm hyp}$$
and moreover, from $F\circ\Phi = F+\lambda$,
$$f\circ\varphi = f+\lambda .$$

Let now $\widehat{\mathcal H}$ be the real codimension one foliation on $\widehat S\setminus\widehat C$ defined by the level sets of $F$. It is clearly $\Phi$-invariant, and hence it descends to a real codimension one foliation ${\mathcal H}$ on $S\setminus C$.
The leaves of ${\mathcal F}_0$ are contained in those of ${\mathcal H}$.

\begin{lemma}\label{smooth}
${\mathcal H}$ is a nonsingular foliation on $S\setminus C$.
\end{lemma}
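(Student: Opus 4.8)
The foliation $\mathcal{H}$ is by definition the descent of the level-set foliation of $F$, so it is nonsingular at a point exactly where $dF\neq 0$. Since $F$ is constant along the leaves of $\widehat{\mathcal F}$ by Lemma~\ref{constant}, on $\widehat S\setminus\widehat C$ it is read off from the $\widehat\Gamma$-invariant function $f\in C^\infty(\mathbb D)$ depending only on the first variable, and $dF=0$ at a point precisely when $df=0$ at its image in $\mathbb D$. Thus the whole statement reduces to proving that
\[
df\neq 0 \quad\text{everywhere on }\mathbb D .
\]
The plan is to study the critical set $Z=\{df=0\}=\{\eta=0\}$, where $\eta:=\partial f=f_z\,dz$, and to show $Z=\emptyset$. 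First I would note that $\eta$ is $\Gamma_{\mathbb D}$-invariant: $\widehat\Gamma_{\mathbb D}$-invariance of $f$ gives $\widehat\Gamma_{\mathbb D}$-invariance of $\eta$, while $f\circ\varphi=f+\lambda$ gives $\varphi^*\eta=\eta$, and $\widehat\Gamma_{\mathbb D}$ together with $\varphi$ generate $\Gamma_{\mathbb D}$. Hence $Z$ is a closed, $\Gamma_{\mathbb D}$-invariant subset of $\mathbb D$. Note also that $\bar\partial\eta$ is a nonzero multiple of $\omega_{\rm hyp}$, so in particular $\eta\not\equiv 0$.

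Because no $\Gamma_{\mathbb D}$-orbit is discrete, $\Gamma_{\mathbb D}$ is a non-discrete subgroup of ${\rm Aut}(\mathbb D)={\rm PSL}(2,\mathbb R)$, and its closure $G$ is a Lie subgroup with $\dim G\geq 1$. Being continuous and $\Gamma_{\mathbb D}$-invariant, $\eta$ is $G$-invariant, and therefore $Z$ is $G$-invariant; if $Z\neq\emptyset$ it contains a whole orbit of the identity component $G^0$. I would then run through the possibilities for $G^0$. If $G^0$ acts transitively (either all of ${\rm PSL}(2,\mathbb R)$ or the two-dimensional Borel subgroup), an invariant $\eta$ is either identically zero, which is excluded, or nowhere zero, giving $Z=\emptyset$; moreover the full group admits no nonzero invariant $(1,0)$-form, since its isotropy $SO(2)$ fixes no nonzero cotangent vector, so that subcase cannot occur at all. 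If $G^0$ is the elliptic group $K_p$ of rotations about a point $p$, then $G=K_p$ because $K_p$ is its own normalizer, so $\Gamma_{\mathbb D}\subset K_p$ and in particular $\varphi$ is elliptic; but then the $\varphi$-orbit of any point lies on a compact hyperbolic circle, on which $f$ is bounded, contradicting $f\circ\varphi^n=f+n\lambda$.

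The heart of the matter is the two remaining cases, $G^0$ a one-parameter parabolic or hyperbolic group. Here I would straighten $G^0$ into the group of real translations by a holomorphic change of coordinate: the identity in the parabolic case (upper half-plane model, $G^0=\{z\mapsto z+s\}$) and $\zeta=\log z$ in the hyperbolic case, which turns the dilations $z\mapsto \alpha z$ into translations of $\operatorname{Re}\zeta$ on the strip $0<\operatorname{Im}\zeta<\pi$. Invariance of $\eta$ then says that $f_z$, respectively $f_\zeta$, depends only on the transverse variable, and integrating the prescribed density $dd^cf=\omega_{\rm hyp}$ determines $f$ up to two real constants. Imposing $\widehat\Gamma_{\mathbb D}$-invariance and the relation $f\circ\varphi=f+\lambda$ with $\lambda\neq0$ then pins these constants down and forces $df$ to be nowhere zero: in the hyperbolic case the translation-invariance of $\partial f$ makes the flow-derivative of $f$ a nonzero constant equal to $\lambda/s_0$, while in the parabolic case one is driven to $f=-\,\mathrm{const}\cdot\log\operatorname{Im}z$, whose differential never vanishes.

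The step I expect to be the main obstacle is precisely this last one. A priori an invariant $\eta$ may vanish along an \emph{entire} $G^0$-orbit — a horocycle in the parabolic case, a hypercycle in the hyperbolic case — and such an orbit of zeros is perfectly compatible both with the invariance and with the strict subharmonicity of $f$, since a strictly subharmonic function may well possess saddle or degenerate critical points. Ruling it out is exactly where one must use that $f$ is strictly monotone in the flow direction, by virtue of $f\circ\varphi=f+\lambda$ with $\lambda\neq0$, played against its invariance under the complementary subgroup $\widehat\Gamma_{\mathbb D}$. Torsion-freeness of $\Gamma_{\mathbb D}$ — its point-stabilizers are generated by irrational rotations by Lemma~\ref{leaves} — enters here as well, to exclude the order-two fixed-point-swapping extension of the hyperbolic group and thereby guarantee that $\varphi$ itself acts as a pure translation in the straightened coordinate.
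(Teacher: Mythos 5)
Your proposal is correct in substance, but it takes a genuinely different route from the paper's, and it proves a strictly stronger statement. The paper never tries to show $df\neq 0$: it observes that $f$ is real analytic (elliptic regularity for $dd^cf=\omega_{\rm hyp}$), so ${\rm Crit}(f)$ is a real analytic subset of $\mathbb D$ invariant under $\Gamma_{\mathbb D}$; since $\Gamma_{\mathbb D}$ has no discrete orbits, ${\rm Crit}(f)$ can have neither isolated points nor curve singularities (those would form discrete invariant sets), hence it is a union of pairwise disjoint smooth real analytic curves, and the level sets of $f$ still define a nonsingular real analytic foliation --- a critical curve is simply a leaf along which $f$ is transversally Morse--Bott, as for $y^2$. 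Your argument instead exploits the group: the closure $G$ of $\Gamma_{\mathbb D}$ is a positive--dimensional Lie subgroup of ${\rm PSL}(2,\mathbb R)$ (non-discreteness follows from the absence of discrete orbits), the form $\eta=\partial f$ is $G$-invariant, and the classification of connected subgroups reduces everything to the transitive, elliptic, parabolic and hyperbolic cases. I checked that your sketched computations do close: in the parabolic case $f=cx+k(y)$ with $k(y)=-a\log y+by+{\rm const}$, and $f\circ\varphi=f+\lambda$ forces either $c\neq 0$ (so $df\neq 0$ outright, and in fact $\widehat\Gamma_{\mathbb D}$ is then trivial, making $\Gamma_{\mathbb D}$ discrete, a contradiction) or $c=0$ and then $b=0$, killing precisely the ``horocycle of critical points'' you were worried about; in the hyperbolic case the flow derivative equals $\lambda/s_0\neq 0$ (and $\widehat\Gamma_{\mathbb D}$-invariance actually makes that case vacuous). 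So you obtain that $F$ has no critical points at all, at the price of a case analysis and explicit integrations that the paper's softer argument avoids. Two small cautions: your opening reduction ``nonsingular exactly where $dF\neq 0$'' is false as an equivalence (the level sets of $y^2$ give a nonsingular foliation through critical points --- this is exactly the loophole the paper's proof lives in), but you only use the correct implication ``$df\neq 0$ everywhere $\Rightarrow$ $\mathcal H$ nonsingular''; and in the parabolic case your summary mentions only the outcome $f=-{\rm const}\cdot\log{\rm Im}\,z$, omitting the subcase where $\varphi$ is itself a translation, which however also yields $df\neq 0$ (or the same discreteness contradiction) as noted above.
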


\begin{proof}
This is equivalent to say that the foliation on ${\mathbb D}$ defined by $f$ is nonsingular.
From the $\widehat\Gamma_{\mathbb D}$-invariance of $f$ and $f\circ\varphi = f+\lambda$, it follows that the critical set ${\rm Crit}(f)$ of $f$ is $\Gamma_{\mathbb D}$-invariant. Remark that $f$ is not only smooth, but even real analytic, hence ${\rm Crit}(f)$ is a real analytic subset, every connected component of which is either a single point or a real analytic curve. Since $\Gamma_{\mathbb D}$ has no discrete orbits, ${\rm Crit}(f)$ has no isolated points, and for the same reason each connected component is free of singularities and it is a smooth real analytic curve. It follows from this that the level sets of $f$ give a smooth (and even real analytic) foliation of the disc.
\end{proof}

In the next section we shall analyse the behaviour of ${\mathcal H}$ around $C$, and this will lead to the proof of Theorem \ref{main}.

\section{Qualitative structure of the real foliation}

We can now better describe the structure of $C$.

\begin{lemma}\label{notree}
No connected component of $C$ is a tree of rational curves.
\end{lemma}

\begin{proof}
By contradiction, let $D$ be a connected component of $C$ which is a tree of rational curves. According to \cite{Cama} (see also \cite[p.40]{Brun}), the foliation ${\mathcal F}$ has a {\it separatrix} based at some point of $D$: there exists $p\in D$ and an analytic curve $A$ in some neighbourhood of $p$, passing through $p$, such that:
\begin{enumerate}
\item[(i)] $A\cap D=\{ p\}$;
\item[(ii)] $A\setminus\{ p\}$ is contained in a leaf of ${\mathcal F}$ (i.e., of ${\mathcal F}_0$).
\end{enumerate}
This leaf $L$ of ${\mathcal F}_0$ cannot be isomorphic to ${\mathbb C}$, otherwise $L\cup\{ p\}$ would be a rational curve, hence $L$ would be properly embedded in $S\setminus C$ and $\Gamma_{\mathbb D}$ would have a discrete orbit, contradicting our general assumptions.
Therefore $L$ is isomorphic to ${\mathbb C}^*$, and by Lemma \ref{leaves} its holonomy has infinite order, generated by an irrational rotation.

However, if $T\simeq{\mathbb D}$ is a disc in $S\setminus C$ transverse to $L$, on which the holonomy acts, then the trace of the real foliation ${\mathcal H}$ on $T$ is a {\it nonsingular} real foliation  which is invariant by the holonomy. It is very easy to see that an irrational rotation cannot preserve a nonsingular real foliation, and this contradiction proves the lemma.
\end{proof}

Remark that the argument of the previous proof shows, more generally, that no leaf of ${\mathcal F}_0$ is isomorphic to ${\mathbb C}^*$. Therefore, every element of $\Gamma$, different from the identity, acts on ${\mathbb D}$ by a parabolic or hyperbolic automorphism. It may be useful at this point to look at \cite{Ino2} and \cite[\S 4]{DOT2}, where the action of $\Gamma$ is explicitely described in the case of Inoue-Hirzebruch surfaces and Kato surfaces of intermediate type.

By Lemma \ref{notree} and Nakamura's results \cite{Nak1}, every connected component of $C$ is a (arboreal) cycle of rational curves. Moreover, and still by \cite{Nak1}, the (cyclic) fundamental group of $C$ injects into $\pi_1(S)$. This means that every connected component of $\widehat C$ is a (arboreal) infinite chain of rational curves, accumulating to both ends of $\widehat S$. Actually, we may suppose from now on that $C$ and $\widehat C$ are both connected, for in the opposite case the paper \cite{Nak1} already shows that $S$ is an Inoue-Hirzebruch surface.

The plurisubharmonic function $F$ on $\widehat S$ is necessarily constant on $\widehat C$, by the maximum principle. But we also have $F\circ\Phi = F+\lambda$, with $\lambda\not= 0$, and $\Phi$ acts on $\widehat C$ as a translation. We therefore conclude that $F$ is equal to $-\infty$ on $\widehat C$, and more precisely
$$\widehat C = \{ F=-\infty\}$$
since $F$ is smooth outside $\widehat C$.

In order to understand better the structure of ${\mathcal H}$ around $C$, it is convenient to resolve the singularities of ${\mathcal F}$, in Seidenberg's sense \cite{Brun}. Denote by 
$$r: S'\to S$$ such a resolution, and set $C'=r^{-1}(C)$, ${\mathcal F}'=r^*({\mathcal F})$. Note that $C'$ is still a (arboreal) cycle of rational curves. 

Take a point $p\in C'$ which is singular for ${\mathcal F}'$. Then, around $p$, the foliation ${\mathcal F}'$ has a plurisubharmonic first integral $G$ (derived from $F$), which is polar along $C'$ and defines a smooth nonsingular foliation outside $C'$. Each branch of $C'$ through $p$ is necessarily a separatrix of ${\mathcal F}'$, and conversely every separatrix is contained in $C'$ (since $G$ is constant on the separatrix and equal to $-\infty$ at $p$). The holonomy of such a separatrix, evaluated on some transverse disc $T$, admits a subharmonic first integral $G\vert_T$, which has a pole at $t=T\cap C'$ and defines a smooth nonsingular foliation outside $t$. The level sets of such a function are necessarily circles around $p$, in some neighbourhood of $p$ (here the smoothness of the foliation is essential). It follows easily that the holonomy of the separatrix is conjugate to a rotation, rational or irrational. 

Remark also that $p$ cannot be a singularity of saddle-node type, whose strong separatrix never has holonomy conjugate to a rotation. By standard linearization results \cite{Brun}, the foliation ${\mathcal F}'$ around $p$ is then expressed by the equation
$$zdw+awdz=0$$
with $a$ real and positive, in suitable coordinates $(z,w)$. 

When $a$ is irrational, the leaves of ${\mathcal F}'$ in this local chart are dense in the real hypersurfaces given by the level sets of $|w||z|^a$. It follows that $G$ factorizes through that function, i.e. $G(z,w)=h(|w||z|^a)$ for some smooth $h:(0,\alpha )\to{\mathbb R}$, increasing  and satisfying $h(t)\to -\infty$ as $t\to 0$ (of course, $h$ satisfies some convex-type relation, since $G$ is plurisubharmonic). When $a$ is rational, say $a=m/n$, then ${\mathcal F}'$ admits the local holomorphic first integral $z^mw^n$. In that case $G$ factorizes as $G(z,w)=g(z^mw^n)$ for some subharmonic $g:{\mathbb D}(\alpha )\to {\mathbb R}$, having a pole at $0$ and whose level sets are circles around $0$.

In both cases, we see that the sublevel sets $\{ G < s\}$, with $s$ close to $-\infty$, provide a fundamental system of tubular neighbourhoods of $\{ zw=0\}$. Remark also that the point $p$ is necessarily a nodal point of $C'$, i.e. we have ${\rm Sing}({\mathcal F}')= {\rm Sing}(C')$.

Of course, a similar property holds when $p\in C'$ is regular for ${\mathcal F}'$: the sublevel sets of the local plurisubharmonic first integral $G$  (derived from $F$) give a fundamental system of tubular neighbourhoods of the leaf through $p$, which is contained in $C'$.

\begin{lemma}\label{euler}
There exists a tubular neighbourhood $V$ of $C'$ in $S'$ such that the Euler characteristic of $S'\setminus V$ vanishes:
$$\chi (S'\setminus V)=0.$$
\end{lemma}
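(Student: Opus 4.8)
The plan is to compute $\chi(S'\setminus V)$ directly from the real foliation ${\mathcal H}$, exploiting that its leaves are three-dimensional, hence of odd Euler characteristic. Recall that $r\colon S'\setminus C'\to S\setminus C$ is a biholomorphism, so ${\mathcal H}$ pulls back to a nonsingular codimension one foliation on $S'\setminus C'$. The whole argument rests on the Poincar\'e--Hopf theorem in the following guise: a compact manifold of even dimension, carrying a vector field which points consistently inward along its boundary and whose zeros fill out closed submanifolds of odd dimension, has vanishing Euler characteristic -- for each such zero submanifold contributes $\pm\chi$ of itself (which is zero) to the total index, while the odd-dimensional boundary contributes nothing.

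To produce the vector field I would use the function $F$. Since $F\circ\Phi=F+\lambda$, the form $dF$ is $\Phi$-invariant and descends to a closed $1$-form $\eta$ on $S\setminus C$; fixing a Riemannian metric on $S$ and pulling it back, the metric dual $X=\eta^\sharp$ descends to a vector field on $S\setminus C$, which I transport to $S'\setminus C'$ via $r$. By construction $X$ is normal to the leaves of ${\mathcal H}$ and points in the direction of increasing $F$, that is \emph{away} from $C'=\{F=-\infty\}$. Around a nodal point of $C'$ the local first integral shows $X=\nabla h(|w||z|^a)$ (irrational case) or $X=\nabla g(|z^mw^n|)$ (rational case), which is nonzero off $C'$ and transverse to the level sets of $|w||z|^a$, respectively $|z^mw^n|$; around a regular point of $C'$ the same holds for the level sets of the normal coordinate. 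Gluing these local level sets I would build a tubular neighbourhood $V\supset{\rm Sing}({\mathcal F}')$ of $C'$ whose boundary $\partial V$ is transverse to the flow of $X$, so that $X$ points into $M:=\overline{S'\setminus V}$ all along $\partial M$.

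It then remains to analyse the zeros of $X$ on $M$, which are exactly the critical points $Z$ of $F$. By Lemma \ref{constant} the function $F$ is constant along the (two-real-dimensional) leaves of $\widehat{\mathcal F}$, while its complex Hessian in the transverse disc direction is the positive form $\omega_{\rm hyp}$; since moreover ${\rm Crit}(f)$ consists of smooth curves, $Z$ is a smooth three-dimensional nondegenerate (Morse--Bott) critical manifold, with the leaf directions tangent to it and a single normal direction carrying a nondegenerate Hessian. Each component of $Z$ being a closed three-manifold has $\chi=0$, so its Morse--Bott contribution vanishes; likewise $\chi(\partial V)=0$ as $\partial V$ is a closed three-manifold. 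The Poincar\'e--Hopf count therefore forces $\chi(M)=\chi(S'\setminus V)=0$.

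The hard part will be the global control of ${\mathcal H}$, equivalently of the gradient flow of $F$, in a full neighbourhood of the singular configuration $C'$. Two difficulties stand out. First, the leaves of ${\mathcal H}$ spiral onto $C$ -- their leaf space is a circle while $\widehat C=\{F=-\infty\}$ -- so no single leaf bounds a tubular neighbourhood; one is forced to take $\partial V$ transverse to the flow of $X$ rather than tangent to ${\mathcal H}$, and to patch the local leaf-models at the nodes (with their differing rational and irrational exponents) into one hypersurface still transverse to $X$. Second, one must keep the critical set $Z$ under control with respect to $\partial V$: since $Z$ has codimension one, verifying that it either stays away from the chosen boundary or meets it cleanly -- so that the zeros really do organise into closed odd-dimensional manifolds -- is the delicate point on which the topological count depends.
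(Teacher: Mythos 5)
Your skeleton is the paper's: apply Poincar\'e--Hopf on $S'\setminus V$ to a field transverse to $\partial V$, the transversality being extracted from the local models of ${\mathcal H}$ near $C'$. But the field you choose creates precisely the difficulty you then leave unresolved, and that is a genuine gap. The vector field $X=\eta^\sharp$ vanishes exactly on the critical set $Z$ of $F$, and Lemma \ref{smooth} does \emph{not} say $Z$ is empty: its proof shows only that ${\rm Crit}(f)$ is a union of smooth real-analytic curves, so $Z$ is in general a nonempty union of real hypersurfaces of $S'\setminus C'$ (critical leaves of ${\mathcal H}$). Your count needs two things you do not establish: (i) a tubular neighbourhood $V$ with $\partial V$ transverse to $X$, which forces $\partial V\cap Z=\emptyset$; and (ii) that every component of $Z\cap(S'\setminus V)$ is a \emph{closed} $3$-manifold. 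Both would fail if critical leaves accumulate on $C'$, and nothing in your write-up excludes this: in your own local model the function $h$ is merely increasing and real analytic on an open interval, and such a function can perfectly well have critical points accumulating at the endpoint $0$. If a component of $Z$ met $\partial V$, it would be a compact $3$-manifold \emph{with boundary}, whose Euler characteristic has no reason to vanish, and the index count collapses. You flag this yourself as ``the delicate point on which the topological count depends''; flagged but unresolved, it is a hole in the proof. (The Morse--Bott part itself is fine: the trace argument does give a rank-one positive normal Hessian along $Z$, and closed odd-dimensional components contribute zero.)

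The gap is closable inside your framework: since $dd^cF$ is the pullback of $\omega_{\rm hyp}$, the local first integrals are \emph{strictly} subharmonic on discs transverse to ${\mathcal F}'$ away from $C'$, so by your own trace computation every critical leaf is a strict local minimum of $F$ in the transverse direction; since $F\to-\infty$ along $C'$, a critical leaf close to $C'$ would trap an intermediate local maximum between itself and the pole, which is impossible; hence $Z$ cannot accumulate on $C'$, it is compact, and any small enough $V$ satisfies (i) and (ii). But the paper's proof shows the whole difficulty is an artifact of working with $dF$. By Lemma \ref{smooth}, ${\mathcal H}'$ is a nonsingular \emph{foliation} on all of $S'\setminus C'$: its tangent plane field is smooth of constant rank even along the critical leaves, where $dF=0$ but the level sets still fit into a smooth foliation. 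The paper therefore takes ${\mathcal L}$ to be the line field orthogonal to $T{\mathcal H}'$, which is nowhere vanishing, chooses $\partial V$ transverse to ${\mathcal L}$ (the same easy patching of the local models that you describe, made easier because transversality to a line field is an open condition), and concludes $\chi(S'\setminus V)=0$ by Poincar\'e--Hopf with no zeros to count at all --- no Morse--Bott analysis, no control of $Z$. In short: replace the metric dual of $dF$, which sees ${\rm Crit}(F)$, by the orthogonal direction field of the foliation, which does not, and both of your ``hard parts'' disappear.
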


\begin{proof}
Fix a Riemannian metric on $S'$. Set ${\mathcal H}'=r^*({\mathcal H})$, which is a
nonsingular foliation on  $S'\setminus C'$ whose behaviour around every point of $C'$ is described by the previous considerations. Let ${\mathcal L}$ be the real one-dimensional nonsingular foliation on $S'\setminus C'$ orthogonal to ${\mathcal H}'$. Then we can easily find a tubular neighbourhood $V$ of $C'$ whose boundary $\partial V$ is transverse to ${\mathcal L}$. That is, ${\mathcal L}\vert_{S'\setminus V}$ is nonsingular and transverse to the boundary, and we get the conclusion by Poincar\'e-Hopf formula.
\end{proof}

Let $\ell$ be the number of rational curves in $C'$. Since $C'$ is a (arboreal) cycle of rational curves, we have
$$\chi (V)=\ell$$
and hence, by additivity of Euler characteristics,
$$\chi (S')=\ell $$
which can be rewritten as 
$$b_2(S')=\ell$$
because, by $b_1(S')=1$, the Euler characteristic of $S'$ coincides with its second Betti number.

We deduce that the surface $S'$ contains (at least) $b_2(S')$ rational curves, and $S$ contains (at least) $b_2(S)$ rational curves. By \cite{DOT2}, $S$ is a Kato surface, and the proof of Theorem \ref{main} is complete (the last part, concerning the location of blow-ups, is discussed in the next section).

\section{Algebraic dimension zero}

We give here the deduction of Corollary \ref{corol}, as well as some complements to Theorem \ref{main}.

Let us observe a general fact. Let $S$ be a compact connected surface and let $U\subset S$
be a Zariski open subset uniformised by ${\mathbb D}\times{\mathbb C}$. Let $\Omega\subset S$ be a strictly pseudoconvex domain whose boundary $M=\partial\Omega$ has finite fundamental group and is fully contained in $U$:
$$M\subset U.$$
We claim that $\Omega$ too is fully contained in $U$:
$$\Omega\subset U.$$
Indeed, $M$ is finitely covered by some compact connected hypersurface $\widetilde M\subset {\mathbb D}\times{\mathbb C}$, which separates the ambient space into two connected components: $({\mathbb D}\times{\mathbb C})\setminus\widetilde M = V_1\cup V_2$, with $V_1$ bounded and $V_2$ unbounded. This hypersurface has a pseudoconvex side and a pseudoconcave one, and since a bounded domain in ${\mathbb C}^2$ cannot be pseudoconcave, we deduce that $V_1$ is pseudoconvex. The projection of $V_1$ to $U$ is a relatively compact pseudoconvex domain $\Omega_1\subset U$ bounded by $M$, and obviously $\Omega$ must coincide with it.

This shows, for instance, that the Zariski closed subset $C=S\setminus U$ is purely one-dimensional, i.e. there are no isolated points in $C$. More generally, $U$ cannot have a pseudoconcave end with finite fundamental group.

Suppose now that the algebraic dimension $a(S)$ is $0$, and let $D\subset S$ be the maximal divisor, that is the union of all compact complex curves. Let $D_j$ be a connected component of $D$, and suppose that it is analytically contractible to one point, so that we can find a strictly pseudoconvex tubular neighbourhood $\Omega_j$ of $D_j$, with boundary $M_j=\partial\Omega_j$ disjoint from $D$. We claim that (still assuming the existence of  $U\subset S$ universally covered by the cylinder)
$$| \pi_1(M_j)| = \infty .$$
To see this, observe firstly that $M_j$ is necessarily contained in $U$, because $C\subset D$ and $M_j\cap D=\emptyset$. Hence, if $| \pi_1(M_j)| < \infty$ then $\Omega_j$ is contained in $U$, and in particular $D_j$ is contained in $U$. But $| \pi_1(M_j)| < \infty$ implies also that $D_j$ is a tree of rational curves, and of course $U$ cannot contain a rational curve. This contradiction proves the claim.

It is here worth observing that the implication ``$| \pi_1(M_j)| < \infty$ $\Rightarrow$ $D_j$ is a tree of rational curves'' is far from being reversible, even when $D_j$ contracts to a rational singularity (which is the case mostly occurring \cite{Nak1}). This is the reason for which in Lemma \ref{notree} we used a very different argument.

According to known results \cite{Nak2} \cite{Tele}, any compact connected complex surface $S$ with $a(S)=0$ has a (unique) minimal model $S_0$ belonging to the following list:
\begin{enumerate}
\item[(1)] tori of algebraic dimension $0$;
\item[(2)] K3 surfaces of algebraic dimension $0$;
\item[(3)] Hopf surfaces of algebraic dimension $0$;
\item[(4)] Inoue surfaces;
\item[(5)] Surfaces of class ${\rm VII}_\circ^+$.
\end{enumerate}

The last case is the object of Theorem \ref{main}. Let us here observe that if $S$ is not minimal (and contains $U$ uniformised by the cylinder) then the blow-ups are necessarily done over points of $S_0$ which belong to the maximal divisor: otherwise, the maximal divisor of $S$ would contain a contractible component $D_j$ with simply connected link $M_j$, a contradiction with the previous remarks.

A Inoue surface is, by definition, a quotient of ${\mathbb D}\times{\mathbb C}$, hence it
enters into Corollary \ref{corol}. The maximal divisor of a Inoue surface is empty; we deduce from this, by the same argument as before, that a blown up Inoue surface cannot contain a Zariski open subset uniformised by the cylinder.
 
A Hopf surface with algebraic dimension $0$ contains one or two elliptic curves, and no more compact curves. The complement of the curve(s) is uniformised by ${\mathbb C}^2$. It is then easily shown that such a surface, even blown up, cannot contain a Zariski open subset uniformised by the cylinder. Remark that this argument was already used to exclude Enoki surfaces at the beginning of the proof of Theorem \ref{main}, and indeed Enoki surfaces are really very similar to Hopf surfaces.
 
The case of tori is excluded in Corollary \ref{corol} because a torus of algebraic dimension $0$ has empty maximal divisor, and we proceed as in the Inoue case.

Finally, the case of K3 surfaces is excluded by the same type of arguments and the following standard fact: every connected component of the maximal divisor of a K3 surface of algebraic dimension $0$ is a tree of $(-2)$-curves whose contraction produces a A-D-E singularity, whose link has finite fundamental group.

\end{document}